\title[smooth graphons in different dimensions]
{Can smooth graphons in several dimensions 
be represented by smooth graphons on $[0,1]$?}
\date{19 January, 2021}
\author{Svante Janson}
\thanks{Supported by the Knut and Alice Wallenberg Foundation}
\address{Department of Mathematics, Uppsala University, PO Box 480,
SE-751~06 Uppsala, Sweden}
\email{svante.janson@math.uu.se}
\newcommand\urladdrx[1]{{\urladdr{\def~{{\tiny$\sim$}}#1}}}
\author{Sofia Olhede}
\thanks{Supported by the European Research Council under Grant CoG2015-682172NETS}
\address{Institute of Mathematics, Ecole Polytechnique F\'ed\'erale de Lausanne,Lausanne, Switzerland}
\email{sofia.olhede@epfl.ch}
\subjclass[2020]{05C80; 62G05} 
\numberwithin{equation}{section}
\renewcommand\le{\leqslant}
\renewcommand\ge{\geqslant}
\theoremstyle{plain}
\newtheorem{theorem}{Theorem}[section]
\newtheorem{lemma}[theorem]{Lemma}
\newtheorem{proposition}[theorem]{Proposition}
\theoremstyle{definition}
\newtheorem{exampleqqq}[theorem]{Example}
\newenvironment{example}{\begin{exampleqqq}}
  {\hfill\qedsymbol\end{exampleqqq}}
\newtheorem{remarkqqq}[theorem]{Remark}
\newenvironment{remark}{\begin{remarkqqq}}
  {\hfill\qedsymbol\end{remarkqqq}}
\theoremstyle{remark}
\newenvironment{romenumerate}[1][-10pt]{
\addtolength{\leftmargini}{#1}\begin{enumerate}
 }{\end{enumerate}}
\newcounter{oldenumi}
{\setcounter{oldenumi}{\value{enumi}}
\begin{romenumerate} \setcounter{enumi}{\value{oldenumi}}}
{\end{romenumerate}}
\newcounter{thmenumerate}
\newcounter{xenumerate}   
\newcommand{\refT}[1]{Theorem~\ref{#1}}
\newcommand{\refL}[1]{Lemma~\ref{#1}}
\newcommand{\refLs}[1]{Lemmas~\ref{#1}}
\newcommand{\refS}[1]{Section~\ref{#1}}
\newcommand{\refP}[1]{Proposition~\ref{#1}}
\newcommand{\refE}[1]{Example~\ref{#1}}
\newcommand\marginal[1]{\marginpar[\raggedleft\tiny #1]{\raggedright\tiny#1}}
\newcommand\REM[1]{{\raggedright\texttt{[#1]}\par\marginal{XXX}}}
\newcommand\XREM[1]{\relax}
\xdef\klockan{\the\count1.0\the\count255}
\xdef\klockan{\the\count1.\the\count255}\fi
\newcommand{\sumko}{\sum_{k=0}^\infty}
\newcommand{\sumid}{\sum_{i=1}^d}
\newcommand\set[1]{\ensuremath{\{#1\}}}
\newcommand\bigpar[1]{\bigl(#1\bigr)}
\newcommand\Bigpar[1]{\Bigl(#1\Bigr)}
\newcommand\lrpar[1]{\left(#1\right)}
\newcommand\bigabs[1]{\bigl\lvert#1\bigr\rvert}
\newcommand\lrabs[1]{\left\lvert#1\right\rvert}
\def\rompar(#1){\textup(#1\textup)}    
\def\xexp(#1){e^{#1}}
\newcommand\norm[1]{\lVert#1\rVert}
\newcommand\punkt{\xperiod}    
\newcommand\eg{e.g\punkt}
\newcommand\bbR{\mathbb R}
\newcommand\bbT{\mathbb T}
\newcounter{CC}
\newcounter{cc}
\newcommand\ga{\alpha}
\newcommand\gb{\beta}
\newcommand\gf{\varphi}
\renewcommand\phi{\xxx}  
\newcommand\cF{\mathcal F}
\newcommand\cS{{\mathcal S}}
\newcommand\intoi{\int_0^1}
\newcommand\oi{\ensuremath{[0,1]}}
\newcommand\dd{\,\mathrm{d}}
\newcommand\intS{\int_{\cS}}
\newcommand\intoid{\int_{\oi^d}}
\newcommand\Holderx[1]{\text{\Holder$(#1)$}}
\newcommand\dox[1]{|#1|_\circ}
\newcommand{\Holder}{H\"older}
\begin{document}

\begin{abstract} 
A graphon that is defined on $[0,1]^d$ and is H\"older$(\alpha)$ continuous
for some $d\ge2$ and $\alpha\in(0,1]$
can be represented by a graphon on $[0,1]$ that is 
H\"older$(\alpha/d)$ continuous.
We give examples that show that this reduction in smoothness to $\alpha/d$
is the best possible, for any $d$ and $\alpha$; 
for $\alpha=1$, the example is a dot product graphon and 
shows that the reduction is the best possible
even for graphons that are  polynomials.

A motivation for studying
the smoothness of graphon functions is that this represents a key assumption
in non-parametric statistical network analysis. 
Our examples show that
making a smoothness assumption in a particular dimension is not equivalent
to making it in any other latent dimension.
\end{abstract}

\maketitle

\section{Introduction}\label{S:intro}
Networks or graphs are a convenient and parsimonious data structure for
representing
objects and their interactions. Initial interest in networks in statistics
has focussed on fitting 
simple and parametric models to summarize data structure~\cite{Kolaczyk},
such as the Chung-Lu or expected degree model, or variants 
of the stochastic block model. What most statistical network models satisfy
is a probabilistic invariance to permutations, and 
this invariance leads to a natural representation of a graph generating
mechanism via a graphon or a graph limit function~\cite{Lovasz} via the
Aldous--Hoover theorem.

In general, a graphon can be defined on any probability space 
$\cS=(\cS,\cF,\mu)$.
A \emph{graphon} on $\cS$ is a symmetric measurable function
$W:\cS^2\to\oi$.
As is well known, graphons representing a graph limit or a random graph
model
are not unique, and there is the notion of (weak)
equivalence of graphons; see \cite{Lovasz}. 
In particular, any graphon is equivalent to a graphon
defined on $\oi$; thus from an abstract point of view, it 
suffices to consider this case, and indeed, several papers consider only
such graphons.
However, in applications, it is often useful to consider other spaces $\cS$,
since not all models of networks are naturally formulated in terms of a
graphon on $[0,1]$.
In particular, 
it is often natural to use
subsets of $\bbR^d$ with $d\ge2$;
some examples are
the random dot product model~\cite{Athreya}, 
and 
applications where the latent dimension is interpreted as a position in a
social space, cf.~\cite{Hoff}.
We consider below the case $\cS=\oi^d$ (with Lebesgue measure);
this means that each node is assigned $d$ latent variables, which are
independent and uniformly distributed on $\oi$.

From a statistical perspective,
thus at best we
can only estimate an element of the equivalence class of a graphon, 
just like in
statistical shape analysis, where we may estimate a shape but we have to
factor out shifts and rotations, as they do not alter the underlying
shape~\cite{Chikuse}. 
The probabilistic invariance that is most closely studied in statistics is
an invariance to temporal and spatial shifts, most commonly found in
stochastic processes~\cite{Adler}. 
The graphon function by analogy can therefore be compared to the spectral
density of a stochastic process, except it does not permit as easy a
characterisation as the spectral density of a random field or time
series. Despite this fact, to enable estimation in statistics assumptions of
regularity of a graphon, such as Lipschitz or \Holder{} continuity, 
has become common when analysing networks
non-parametrically~\cite{Gao,Olhede,Orbanz}.  

Despite the recent progress in statistics, machine learning and network data
analysis, it is unclear how restrictive the assumption of either a Lipschitz
or H\"older$(\alpha)$ graphon on $[0,1]$ is, and also
how it compares with such assumptions for graphons defined on other 
probability spaces $\cS$.

The aim of this paper is to explore
the consequences of assuming smoothness of a graphon when
its arguments takes values in $[0,1]^d$. 
It is easy to see that any $\Holderx{\ga}$ graphon on $[0,1]^d$ ($d\ge2$) is
equivalent to a H\"older($\ga/d$) graphon on $[0,1]$;
in particular any Lipschitz smooth graphon on $[0,1]^d$ is
equivalent to a H\"older($1/d$) graphon on $[0,1]$  (\refT{T1}).
Moreover, we give examples showing that in general this is the best possible.
In particular, we exhibit a simple infinitely differentiable graphon on
$\oi^d$ that is not equivalent to any
H\"older($q$) graphon on $[0,1]$ for $q>1/d$. 
The interpretation of this is
that a smoothness assumption in a particular dimension ``has teeth'' and
thus represents a real restriction, which furthermore depends on the dimension.

What is the statistical importance of that result? By
assuming smoothness we are able to exhibit a member of the equivalence class
of graphon functions and so bound any approximation error going from a block model
to a H\"older($\alpha$) smooth function, drawing on classical results in
numerical analysis~\cite{DeVore} and the convergence of order statistics,
see e.g.~\cite{Olhede}.
Furthermore by
averaging we reduce variance and so make the average block heights nicely
behaved random variables (controlled tail behaviour), irrespectively of what
groupings we keep in a block.  So the urge to average is natural, as
so many nice results come from this act. However it comes at a price, namely to justify averaging in blocks we need to assume graphon smoothness 
in $[0,1]$ and not all graphons will satisfy this assumption.

\section{Notation and main results}

Recall that, for a given  $\ga\in(0,1]$,
 a function $f$
defined on a subset $\cS$ of a Euclidean space $\bbR^d$, say,
is \emph{\Holderx{\ga}} if there exists a constant $C<\infty$ such that
\begin{align}\label{holder}
  |f(x)-f(y)|\le C|x-y|^\ga,
\qquad x,y\in\cS.
\end{align}
Functions that are \Holderx1{} are also called \emph{Lipschitz}.
In particular, this notion applies to graphons $W$ defined on $\oi^d$; recall
that then $W$ is a function on $\oi^{2d}$.

As said above, graphons are not unique, 
see \eg{} \cite{Lovasz} and \cite{SJ249}.
In particular, if $\cS_1$ and $\cS_2$ are two probability spaces and
$\gf:\cS_1\to\cS_2$ is a measure-preserving map, then for any graphon $W$
on $\cS_2$, its \emph{pull-back} $W^\gf(x,y):=W\bigpar{\gf(x),\gf(y)}$
is a graphon on $\cS_1$ that is equivalent to $W$.
The converse does not hold, but it holds ``almost'', see \refP{P0} below.

We note first a simple result showing that 
every \Holder{} continuous graphon on $\oi^d$
is equivalent to a
graphon on $\oi$ that is 
\Holder{} continuous albeit with a different \Holder{} exponent after the
change of dimension.
We regard $\oi$ and $\oi^d$ as probability spaces equipped
with the usual  Lebesgue measure.

\begin{theorem}\label{T1}
  Let\/ $W$ be a graphon on $\oi^d$ that is $\Holderx\ga$ 
for some   $d\ge2$ and $\ga\in(0,1]$.
Then there exists an equivalent graphon on $\oi$ that is \Holderx{\ga/d}.

In particular, if\/ $W$ is differentiable, or just Lipschitz, then
there exists an equivalent graphon on $\oi$ that is \Holderx{1/d}.
\end{theorem}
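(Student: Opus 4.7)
The plan is to exhibit a measure-preserving map $\gf:\oi\to\oi^d$ that is itself $\Holderx{1/d}$, and then take the pull-back $W^\gf(x,y):=W\bigpar{\gf(x),\gf(y)}$. Since $\gf$ pushes Lebesgue measure on $\oi$ forward to Lebesgue measure on $\oi^d$, the pull-back property recalled just before the statement guarantees that $W^\gf$ is equivalent to $W$ as graphons, so it suffices to check the H\"older exponent of $W^\gf$.

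The main ingredient is a classical space-filling curve of Peano or Hilbert type. Such a map $\gf:\oi\to\oi^d$ can be constructed as the uniform limit of a sequence of piecewise linear curves that visit the $2^{dk}$ sub-cubes of side $2^{-k}$ in a suitable order; by construction it is surjective, pushes Lebesgue measure on $\oi$ forward to Lebesgue measure on $\oi^d$, and satisfies a H\"older bound $|\gf(s)-\gf(t)|\le C_d|s-t|^{1/d}$. These properties are entirely classical, so I would cite a standard reference rather than reprove them.

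With $\gf$ in hand, the H\"older estimate for $W^\gf$ on $\oi^2$ is a direct computation. For $(s_1,s_2),(t_1,t_2)\in\oi^2$ and $i=1,2$ one has $|s_i-t_i|\le |(s_1,s_2)-(t_1,t_2)|$, hence $|\gf(s_i)-\gf(t_i)|\le C_d|(s_1,s_2)-(t_1,t_2)|^{1/d}$. Feeding this into the $\Holderx{\ga}$ bound for $W$ on $\oi^{2d}$ produces an estimate of the form $C'|(s_1,s_2)-(t_1,t_2)|^{\ga/d}$, with $C'$ depending only on the H\"older data of $W$, on $d$, and on $\ga$. This is exactly the $\Holderx{\ga/d}$ condition for $W^\gf$, and the Lipschitz (or differentiable) case is the specialisation $\ga=1$.

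The only genuine obstacle is producing $\gf$ with \emph{both} the measure-preservation and the $\Holderx{1/d}$ bound; this is precisely the content of the space-filling curve construction and is the point at which I would lean on the literature. Once $\gf$ is available, the remainder of the argument is a short bookkeeping exercise.
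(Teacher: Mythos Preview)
Your proposal is correct and follows essentially the same approach as the paper: take a measure-preserving space-filling curve $\gf:\oi\to\oi^d$ that is $\Holderx{1/d}$ (citing standard references), and observe that the pull-back $W^\gf$ is equivalent to $W$ and $\Holderx{\ga/d}$ by composition. The paper's proof is terser, leaving the composition estimate implicit, but the strategy is identical.
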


\begin{proof}
  Several standard constructions of Peano curves yield a measure-pre\-serving
  map $\gf:\oi\to\oi^d$ that is \Holderx{1/d},
see \eg{} \cite{Sagan} and \cite{deFreitas}. 
Then the pull-back $W^\gf$ is a graphon on $\oi$ that
is equivalent to $W$ and is \Holderx{1/d}.
\end{proof}

Our main purpose is to show that \refT{T1} is the best possible, by exhibiting
 graphons, for which the exponent $\ga/d$ cannot be improved.

\begin{example}\label{E1}
  Let $W$ be the graphon associated with the random dot product graph on $\oi^d$ given by
  \begin{align}\label{WE1}
    W(x,y) = a x\cdot y = c(x_1y_1+\dotsm+ x_dy_d),
\qquad x,y\in\oi^d,
  \end{align}
where $a>0$ is a constant, $\cdot$ is the scalar product, and 
$x=(x_1,\dots,x_d)$,
$y=(y_1,\dots,y_d)$.
(The constant $a$ may be chosen as $1/d$ to make $0\le W\le 1$.)

$W$ is a polynomial and thus infinitely differentiable.
We show in \refS{Spf} that $W$ is not equivalent to any graphon on $\oi$
that is \Holderx{\ga} for any $\ga>1/d$; in particular not to any 
Lipschitz or differentiable graphon on $\oi$.
\end{example}

\begin{example}\label{E2}
Let $d\ge 2$ and $\ga\in(0,1)$.
Let $h_\ga(t)$ be the 
Weierstrass function
given by the lacunary Fourier series
\begin{align}
  h_\ga(t):=\sumko 2^{-k\ga} \cos\bigpar{2\pi 2^k t}.
\end{align}
Then $h_\ga$ is a
symmetric and periodic real-valued function on $\bbR$. 
Furthermore, it is easy to see that
$h_\ga\in\Holderx\ga$, see \cite[Theorem II.(4.9)]{Zygmund}.

Now define, for
$x=(x_1,\dots,x_d)$ and
$y=(y_1,\dots,y_d)$ in $\oi^d$,
\begin{align}\label{WE2}
  W(x,y):=\frac12+a\sumid h_\ga(x_i-y_i),
\end{align}
where $a>0$ is chosen so small that $0\le W(x,y)\le 1$.
Then $W$ is a graphon on $\oi^d$, and $W$ is $\Holderx\ga$ since $h_\ga$ is.
By \refT{T1},
there exists a graphon $W'$ on
$\oi$ that is equivalent to $W$ and which is $\Holderx{\ga/d}$.
We show in \refS{Spf} that this is the best possible;
$W$ is not equivalent to any graphon on $\oi$
that is \Holderx{\gb} for any $\gb>\ga/d$.
\end{example}



\section{Proofs}\label{Spf}

We first quote
the following characterization of equivalence of
graphons, proved by
\citet{BCL:unique}.
(See also \cite[Theorem 13.10]{Lovasz} and \cite[Theorems 8.3 and 8.4]{SJ249}.)
\begin{proposition}[\citet{BCL:unique}]\label{P0}
 Two graphons $W_1$ and $W_2$, 
defined on probability spaces $\cS_1$ and $\cS_2$, respectively, 
are equivalent if and only if there exists a third graphon $W$ on a
probability space $\cS$ and two measure-preserving maps $\gf_1:\cS_1\to\cS$ 
and $\gf_2:\cS_2\to\cS$ such that $W_j$ a.e.\ equals the pull-back
$W^{\gf_j}$,
$j=1,2$. 
\end{proposition}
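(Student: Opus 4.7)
The forward direction is immediate and does not really need the statement: as noted just before \refP{P0}, any pull-back of a graphon is equivalent to the original, and equivalence is transitive, so if $W_j$ agrees a.e.\ with $W^{\gf_j}$ for $j=1,2$, then $W_1\equiv W\equiv W_2$. All of the content lies in the converse.

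For the converse, my plan is to build a common ``pure'' (twin-free) quotient of $\cS_1$ and $\cS_2$ and use it as $\cS$. Given a graphon $W_j$ on $\cS_j$, introduce the equivalence relation $x\sim_j y$ iff $W_j(x,\cdot)=W_j(y,\cdot)$ in $L^1(\cS_j)$; let $\widetilde\cS_j$ be the quotient measurable space, equipped with the push-forward of $\mu_j$, and let $\widetilde W_j$ be the induced symmetric kernel on $\widetilde\cS_j\times\widetilde\cS_j$. The quotient map $\pi_j:\cS_j\to\widetilde\cS_j$ is then measure-preserving and satisfies $W_j=\widetilde W_j^{\pi_j}$ a.e. Once this is in place, the plan reduces to showing that equivalent graphons have isomorphic pure versions, i.e.\ a measure-preserving bijection $\psi:\widetilde\cS_1\to\widetilde\cS_2$ (modulo null sets) with $\widetilde W_1=\widetilde W_2\circ(\psi\times\psi)$ a.e. Granted this, take $\cS=\widetilde\cS_2$, $W=\widetilde W_2$, $\gf_2=\pi_2$, and $\gf_1=\psi\circ\pi_1$, and verify directly that $W^{\gf_j}=W_j$ a.e.

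The main obstacle is producing $\psi$. My route is the sampling characterization of equivalence: $W_1\equiv W_2$ precisely when the exchangeable infinite random graphs $\mathbb{G}(\infty,W_1)$ and $\mathbb{G}(\infty,W_2)$ have the same distribution, one of the standard reformulations (cf.\ \cite{Lovasz}). From this common distribution one can reconstruct $\widetilde\cS_j$ and $\widetilde W_j$ intrinsically through an Aldous--Hoover type argument: points of $\widetilde\cS_j$ correspond to the ``local types'' of a random vertex, labelled by its conditional edge-probability function $W_j(x,\cdot)$ viewed as an $L^1$-equivalence class, and the pushed measure is the law of these types under a uniform sample. Since this reconstruction depends only on the distribution of the sampled graph, the two outputs are canonically identified, which delivers~$\psi$.

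The remaining work is technical: checking that the twin relation is jointly measurable; ensuring the quotient carries a decent (say, standard Borel) structure, if need be after first replacing each $\cS_j$ by a Borel realization; handling null sets so that the a.e.\ identities compose correctly; and, should any equivalence class have positive $\mu_j$-measure (atoms in the quotient), attaching an auxiliary $\oi$ coordinate so that $\gf_j$ remains measure-preserving without affecting the values of~$W_j$. None of this is conceptually deep, but keeping all the a.e.\ identifications consistent through the reconstruction step is where I expect the argument to demand the most care.
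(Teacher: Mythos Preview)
The paper does not prove \refP{P0} at all: it is stated as a quoted result, attributed to \citet{BCL:unique}, with further references to \cite[Theorem 13.10]{Lovasz} and \cite[Theorems 8.3 and 8.4]{SJ249}. So there is no ``paper's own proof'' to compare against; the authors simply cite the result and use it as a black box in the proof of \refL{L1}.

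That said, your sketch is in the spirit of the proofs in the cited references: passing to the twin-free (pure) quotient and then establishing that two equivalent pure graphons are isomorphic up to null sets is exactly the strategy in \cite{BCL:unique} and \cite{SJ249}. Your outline is reasonable, but note that the hard step---the uniqueness of the pure representation up to a measure-preserving almost-bijection---is precisely the substance of the Borgs--Chayes--\Lovasz{} theorem, and your reconstruction-from-samples argument is more of a heuristic than a proof as written. If you want to turn this into a self-contained argument, that step would need to be fleshed out (or simply cited), and the measurability and null-set bookkeeping you flag at the end is indeed nontrivial. For the purposes of this paper, however, a citation suffices.
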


Our proofs are based on  a  functional 
of graphons, defined as follows.
Let $q>0$.
For a graphon $W$ on a probability space $\cS$, 
or more generally any  measurable function $W:\cS^2\to\bbR$,
we define 
\begin{align}
  \label{Psi}
\Psi_q(W) :=
  \intS  \intS 
\lrpar{ \intS|W(x,z) - W(y,z)| \dd\mu(z)}^{-q} 
\dd\mu(x)\dd\mu(y) 
\le\infty
.\end{align}
This functional is related to integrals used for, e.g., $L^p$-versions of
\Holder{} continuity, but note the negative power; thus $\Psi_q(W)$ is large
(or infinite) when $W$ is sufficiently smooth, and $1/\Psi_q(W)$ may be
regarded as a special  kind of  measure of (lack of) smoothness.

The claims in the examples will follow from the lemmas below.

\begin{lemma}\label{L1}
If\/ $W$ and $W'$ are two equivalent graphons, possibly defined on different
  probability spaces, then
$\Psi_q(W)=\Psi_q(W')$ for every $q>0$.
\end{lemma}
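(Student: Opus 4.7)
The plan is to reduce to the case of a pullback along a measure-preserving map, and then use the change-of-variables formula three times. By \refP{P0}, if $W$ and $W'$ are equivalent, then there exist a graphon $\widetilde W$ on a probability space $\widetilde\cS$ and measure-preserving maps $\gf:\cS\to\widetilde\cS$ and $\gf':\cS'\to\widetilde\cS$ such that $W=\widetilde W^\gf$ and $W'=\widetilde W^{\gf'}$ a.e. Hence it suffices to prove the single identity
\begin{align*}
\Psi_q(\widetilde W^{\gf}) = \Psi_q(\widetilde W)
\end{align*}
whenever $\gf:(\cS,\mu)\to(\widetilde\cS,\widetilde\mu)$ is measure-preserving and $\widetilde W$ is a graphon on $\widetilde\cS$; applying this to both $\gf$ and $\gf'$ yields the lemma.

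To prove this identity I would proceed in three steps, each an application of the standard change-of-variables rule $\int f\circ\gf\,d\mu = \int f\,d\widetilde\mu$ for a measure-preserving map $\gf$. First, for fixed $x,y\in\cS$, the inner integral in \eqref{Psi} applied to $\widetilde W^\gf$ is
\begin{align*}
\int_\cS\bigabs{\widetilde W(\gf(x),\gf(z))-\widetilde W(\gf(y),\gf(z))}\,d\mu(z),
\end{align*}
which by change of variables in $z$ equals $g(\gf(x),\gf(y))$, where
\begin{align*}
g(u,v):=\int_{\widetilde\cS}\bigabs{\widetilde W(u,\tilde z)-\widetilde W(v,\tilde z)}\,d\widetilde\mu(\tilde z).
\end{align*}
Second, substitute this expression into the outer integrals to get
\begin{align*}
\Psi_q(\widetilde W^\gf)=\int_\cS\int_\cS g(\gf(x),\gf(y))^{-q}\,d\mu(x)\,d\mu(y).
\end{align*}
Third, since $\gf$ is measure-preserving, the product map $\gf\times\gf:\cS^2\to\widetilde\cS^2$ is measure-preserving with respect to the product measures (an elementary verification on rectangles plus a monotone-class or Fubini argument), so another change of variables gives
\begin{align*}
\Psi_q(\widetilde W^\gf)=\int_{\widetilde\cS}\int_{\widetilde\cS} g(u,v)^{-q}\,d\widetilde\mu(u)\,d\widetilde\mu(v)=\Psi_q(\widetilde W).
\end{align*}

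The argument is essentially routine; the only subtlety to watch for is measurability of the inner integral $g$ and of $g\circ(\gf\times\gf)$ so that Fubini and the change-of-variables rule apply cleanly. This is handled by noting that $|\widetilde W(u,\tilde z)-\widetilde W(v,\tilde z)|$ is a nonnegative measurable function on $\widetilde\cS^3$, so by Tonelli $g$ is measurable on $\widetilde\cS^2$; the integrand $g^{-q}$ takes values in $[0,\infty]$ and the identity then holds in $[0,\infty]$, which is exactly the sense in which $\Psi_q$ was defined. No integrability hypothesis is needed, and the argument works uniformly in $q>0$.
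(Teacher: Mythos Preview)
Your proof is correct and follows the same route as the paper's own proof: reduce via \refP{P0} to the pull-back case and then change variables in the integrals. The paper dismisses the latter as ``trivial changes of variables in the integrals''; you have simply made those changes explicit and noted the measurability points, which is entirely appropriate.
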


\begin{proof}
  By \refP{P0},
it suffices to prove this when $W'$ is a.e.\ equal to
a pull-back of $W$ by a measure-preserving map.
This case follows by trivial changes of variables in the integrals.
\end{proof}

\begin{lemma}\label{L2}
If\/ $\ga>0$ and\/ $W$ is a graphon on $\oi$ such that 
$W$ is \Holderx{\ga}, 
then
$\Psi_q(W)=\infty$ for every $q\ge 1/\ga$.
\end{lemma}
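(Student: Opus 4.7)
The plan is to use the Hölder bound directly on the integrand of $\Psi_q$, noting carefully that the negative exponent reverses the inequality.

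First, I would observe that if $W$ is \Holderx{\ga} on $\oi^2$ with constant $C$, then for every $x,y,z\in\oi$,
\begin{align*}
|W(x,z)-W(y,z)|\le C\bigabs{(x,z)-(y,z)}^{\ga}=C|x-y|^{\ga},
\end{align*}
so integrating in $z$ gives $\int_0^1 |W(x,z)-W(y,z)|\dd z\le C|x-y|^{\ga}$. Raising to the power $-q$ (which reverses the inequality, provided the inner integral is nonzero; if it vanishes on a set of positive measure the integrand is $+\infty$ there and we are done), we obtain
\begin{align*}
\lrpar{\int_0^1 |W(x,z)-W(y,z)|\dd z}^{-q}\ge C^{-q}|x-y|^{-q\ga}.
\end{align*}

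Then I would integrate this inequality over $(x,y)\in\oi^2$ to conclude
\begin{align*}
\Psi_q(W)\ge C^{-q}\int_0^1\int_0^1 |x-y|^{-q\ga}\dd x\dd y.
\end{align*}
The final step is the elementary observation that the double integral $\int_0^1\int_0^1 |x-y|^{-s}\dd x\dd y$ diverges precisely when $s\ge 1$; applied with $s=q\ga$, this gives $\Psi_q(W)=\infty$ whenever $q\ga\ge 1$, i.e.\ $q\ge 1/\ga$.

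There is no real obstacle here beyond book-keeping: the only point requiring a word of care is the case where the inner $L^1$-norm $\int|W(x,z)-W(y,z)|\dd z$ happens to be zero for some $(x,y)$, in which case the integrand in \eqref{Psi} is understood as $+\infty$ and the conclusion is immediate, so the inversion step is legitimate on the complementary set.
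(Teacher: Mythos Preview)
Your proof is correct and follows essentially the same route as the paper: bound the inner integral by $C|x-y|^{\ga}$ using the \Holder{} condition, invert to get a lower bound on the integrand, and then observe that $\intoi\intoi|x-y|^{-q\ga}\dd x\dd y=\infty$ when $q\ga\ge1$. Your extra remark about the case where the inner integral vanishes is a harmless bit of added care that the paper leaves implicit.
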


\begin{proof}
  By assumption,
$|W(x,z)-W(y,z)|\le C|x-y|^\ga$,
and thus
$\intoi|W(x,z)-W(y,z)|\dd z\le C|x-y|^\ga$. Hence,
 \eqref{Psi} yields
\begin{align}
  \Psi_q(W) \ge C^{-q}\intoi\intoi |x-y|^{-q\ga}\dd x\dd y
=\infty,
\end{align}
since $q\ga\ge1$.
\end{proof}

\begin{lemma}\label{L3}
  If\/ $d\ge1$, $\cS=\oi^d$ 
and $W(x,y):=a x\cdot y$ for $x,y\in\oi^d$ and some $a>0$,
then  $\Psi_q(W)<\infty$ for every $q< d$.
\end{lemma}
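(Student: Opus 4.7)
The plan is to open up the definition \eqref{Psi} for $W(x,y)=a\,x\cdot y$ and observe that the inner quantity $W(x,z)-W(y,z)=a(x-y)\cdot z$ depends on $x,y$ only through the difference $v:=x-y$. So the core of the argument reduces to controlling the function
\begin{equation}
 g(v):=\intoid |v\cdot z|\dd z,\qquad v\in\bbR^d,
\end{equation}
from below in terms of $|v|$.

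First I would note that $g$ is positively homogeneous of degree $1$: $g(tv)=|t|g(v)$ for $t\in\bbR$. It is also continuous, and for any unit vector $u\in\bbR^d$ the linear form $z\mapsto u\cdot z$ is not a.e.\ zero on $\oi^d$, so $g(u)>0$. Since the unit sphere is compact, $c:=\inf_{|u|=1} g(u)>0$, and by homogeneity
\begin{equation}
 g(v)\ge c|v|,\qquad v\in\bbR^d.
\end{equation}
Consequently, for every $x,y\in\oi^d$,
\begin{equation}
 \intoid|W(x,z)-W(y,z)|\dd z = a\,g(x-y)\ge ac|x-y|.
\end{equation}

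Plugging this bound into \eqref{Psi} gives
\begin{equation}
 \Psi_q(W)\le (ac)^{-q}\intoid\intoid |x-y|^{-q}\dd x\dd y.
\end{equation}
The last double integral is a standard Riesz-type integral on the cube; by translation it is bounded by $\int_{[-1,1]^d}|w|^{-q}\dd w$, which in polar coordinates is a constant times $\int_0^{\sqrt d} r^{d-1-q}\dd r$, finite precisely when $q<d$. This gives $\Psi_q(W)<\infty$ for every such $q$, as claimed.

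The only real step of substance is the homogeneity/compactness argument for the lower bound $g(v)\ge c|v|$; everything else is a direct substitution and an application of the classical integrability of $|w|^{-q}$ on a bounded subset of $\bbR^d$. Note that the argument does not require $W$ to take values in $\oi$: any $a>0$ works, since the constant $a$ only affects the prefactor in the final bound.
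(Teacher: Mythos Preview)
Your proof is correct and follows essentially the same route as the paper: define the auxiliary function $g(v)=\intoid|v\cdot z|\dd z$ (the paper calls it $h$), use continuity, positivity off the origin, and compactness of the unit sphere together with homogeneity to get $g(v)\ge c|v|$, and then reduce $\Psi_q(W)$ to the integrability of $|x-y|^{-q}$ on $\oi^d\times\oi^d$ for $q<d$. The only cosmetic differences are that the paper normalizes $a=1$ at the outset, while you carry the constant through and add a sentence justifying the finiteness of the final double integral via translation and polar coordinates.
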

\begin{proof}
  By homogeneity, we may without loss of generality assume $a=1$.
Then 
\begin{align}\label{ca}
|W(x,z)-W(y,z)|=|x\cdot z - y \cdot z| =|(x-y)\cdot z|.
\end{align}
Define, for $x\in \bbR^d$,
\begin{align}\label{cb}
  h(x):=\intoid|x\cdot z|\dd z.
\end{align}
Then $h(x)$ is a continuous function of $x$, and $h(x)>0$ for $x\neq0$.
Hence, $c_d:=\inf \set{h(x):|x|=1}>0$ by compactness of the unit sphere.
Furthermore, homogeneity yields $h(x)\ge c_d |x|$ for every $x\in \bbR^d$.
Consequently, using \eqref{ca},
\begin{align}\label{cc}
\intoid|W(x,z)-W(y,z)|\dd z
= h(|x-y|)
\ge c_d |x-y|,
\end{align}
and the definition \eqref{Psi} yields
\begin{align}\label{cd}
  \Psi_q(W) \le c_d^{-q} \intoid\intoid |x-y|^{-q}\dd x\dd y
<\infty,
\end{align}
recalling the assumption $q<d$.
\end{proof}

\begin{proof}[Proof of claim in \refE{E1}]
  Suppose that $W'$ is a graphon on $\oi$ that is equivalent to $W$ and
also is $\Holderx\ga$ for some $\ga>1/d$. Take $q:=1/\ga<d$.
Then $\Psi_q(W')=\infty$ by \refL{L2} and $\Psi_q(W)<\infty$ by \refL{L3},
which contradicts \refL{L1}.
\end{proof}

\begin{lemma}
  \label{LE2}
If\/ $d\ge2$, $0<\ga<1$ and\/ $W$ is given by \eqref{WE2},
then $\Psi_q(W)<\infty$ for every $q<d/\ga$.
\end{lemma}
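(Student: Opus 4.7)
The plan is to follow the template of \refL{L3}: lower-bound $\intoid|W(x,z)-W(y,z)|\,\dd z$ by $c|x-y|^\ga$, from which $\Psi_q(W)\le c^{-q}\iint_{\oi^{2d}}|x-y|^{-q\ga}\,\dd x\,\dd y<\infty$ follows for $q\ga<d$ exactly as in \eqref{cd}. The constant $\tfrac12$ in \eqref{WE2} cancels in the difference, and the substitution $s_i:=x_i-z_i$ together with the $1$-periodicity of $h_\ga$ (which makes the integrand $1$-periodic in each $z_i$, so one may shift the region back to $\oi^d$) rewrites the inner integral as
\begin{align*}
a\intoid|F_u(s)|\,\dd s,\qquad u:=x-y,\qquad F_u(s):=\sumid\Delta_{u_i}h_\ga(s_i),
\end{align*}
where $\Delta_v h_\ga(t):=h_\ga(t)-h_\ga(t-v)$.

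The technical heart is the bound $\|F_u\|_{L^1(\oi^d)}\ge c|u|^\ga$, which I would obtain via the elementary inequality $\|f\|_1\ge\|f\|_2^2/\|f\|_\infty$. Since $h_\ga$ has zero mean on $\oi$, each summand $\Delta_{u_i}h_\ga(s_i)$ is mean-zero in $s_i$ and depends on a single coordinate, so the summands of $F_u$ are pairwise orthogonal in $L^2(\oi^d)$ and $\|F_u\|_2^2=\sumid\|\Delta_{u_i}h_\ga\|_{L^2(\oi)}^2$. Applying $\cos A-\cos B=-2\sin(\tfrac{A+B}{2})\sin(\tfrac{A-B}{2})$ termwise and invoking the $L^2$-orthogonality of the lacunary trigonometric system $\{\sin(2\pi 2^k t-\pi 2^k v)\}_{k\ge 0}$ yields
\begin{align*}
\|\Delta_v h_\ga\|_{L^2(\oi)}^2=2\sumko 2^{-2k\ga}\sin^2(\pi 2^k v).
\end{align*}
For $0<|v|\le\tfrac12$, choosing $k_0\ge 0$ with $2^{-k_0-2}<|v|\le 2^{-k_0-1}$ forces $\pi 2^{k_0}|v|\in(\pi/4,\pi/2]$, and the $k_0$-th term alone gives $\|\Delta_v h_\ga\|_2^2\ge c|v|^{2\ga}$. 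Pairing this with $\|F_u\|_\infty\le\sumid\|\Delta_{u_i}h_\ga\|_\infty\le Cd\max_i|u_i|^\ga$ (from $\Holderx\ga$-continuity of $h_\ga$) and combining,
\begin{align*}
\|F_u\|_{L^1(\oi^d)}\ge\frac{\|F_u\|_2^2}{\|F_u\|_\infty}\ge\frac{c\sum_i|u_i|^{2\ga}}{Cd\max_i|u_i|^\ga}\ge c'\max_i|u_i|^\ga\ge c''|u|^\ga.
\end{align*}

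The expected main obstacle is the $L^2$ lower bound $\|\Delta_v h_\ga\|_2^2\gtrsim|v|^{2\ga}$: this is where the lacunary structure of the Weierstrass series is essential, capturing the fact that $h_\ga$ saturates $\Holderx\ga$ and is no smoother. A secondary technicality is that the $k_0$ construction presupposes $|u_i|\le\tfrac12$; for $|u_i|\in(\tfrac12,1]$ one exploits the $1$-periodicity of $\Delta_v h_\ga$ in $v$ to replace $u_i$ by $u_i\mp 1$, yielding the uniform bound $\|F_u\|_1\gtrsim\max_i d(u_i,\bbZ)^\ga$. The integrand in $\Psi_q(W)$ then has additional singularities on the lower-dimensional loci where some $u_i\in\bbZ\setminus\{0\}$, but the Jacobian factor $\prod_i(1-|u_i|)$ from the change of variable $u=x-y$ vanishes there and tames them; the strongest constraint, $q\ga<d$, comes solely from the singularity at $u=0$.
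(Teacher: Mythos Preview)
Your argument is correct and reaches the same endpoint as the paper, namely the lower bound
$\intoid|W(x,z)-W(y,z)|\dd z \gtrsim \max_i d(u_i,\bbZ)^\ga$ (which is $\asymp\dox{x-y}^\ga$ in the paper's torus notation), after which the finiteness of $\Psi_q(W)$ for $q\ga<d$ is routine; the paper also leaves this last integration step implicit.

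The route, however, is genuinely different. The paper does \emph{not} use $\norm{f}_1\ge\norm{f}_2^2/\norm{f}_\infty$. Instead it first applies the triangle inequality in the form
\[
\intoid|F_u(s)|\dd s \ge \intoi\Bigl|\int_{\oi^{d-1}}F_u(s)\dd s_2\dotsm\dd s_d\Bigr|\dd s_1,
\]
and the same mean-zero/product structure you exploit for $L^2$-orthogonality collapses the inner integral to $\Delta_{u_1}h_\ga(s_1)$ alone. This reduces the problem to the \emph{one-dimensional} bound $\norm{\Delta_v h_\ga}_{L^1\oi}\gtrsim d(v,\bbZ)^\ga$, which the paper obtains by invoking Zygmund's theorem that $L^1$ and $L^2$ norms are comparable for lacunary trigonometric series, followed by the same Parseval computation you carry out.

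What each approach buys: the paper's ``integrate out all but one variable'' trick is shorter and never needs an $L^\infty$ upper bound on $F_u$, but it relies on the nontrivial Zygmund $L^1\!\sim\!L^2$ equivalence for lacunary series. Your approach is more elementary---it avoids Zygmund entirely, using only the easy \Holderx{\ga} upper bound for $h_\ga$ in the denominator---at the cost of a slightly longer chain of inequalities and the need to track both $\norm{F_u}_2$ and $\norm{F_u}_\infty$. (A minor remark: the orthogonality of the summands $\Delta_{u_i}h_\ga(s_i)$ follows already from periodicity, since $\intoi\Delta_v h_\ga=0$ regardless of the mean of $h_\ga$; you do not actually need $h_\ga$ itself to be mean-zero.)
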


\begin{proof}
 Define for $x,y\in\oi$,
\begin{align}
  \dox{x-y}:=\min\bigpar{|x-y|,1-|x-y|}
\end{align}
and define, more generally, for 
$x=(x_i)_1^d\in\oi^d$ and $y=(y_i)_1^d\in\oi^d$,
\begin{align}
\dox{x-y}:=\sum_{i=1}^d\dox{x_i-y_i}.  
\end{align}
(These can be regarded as metrics on $\bbT$ and $\bbT^d$, where the unit
circle $\bbT$  is regarded as
$\oi$ with the endpoints 0 and 1 identified.)

The function $h_\ga$ satisfies
for some $c_1,c_2>0$ and all $y\in(0,1)$,
\begin{align}\label{h1}
  \norm{h(\cdot)-h(\cdot-y)}_{L^1\oi}
\ge
c_1  \norm{h(\cdot)-h(\cdot-y)}_{L^2\oi}
\ge c_2 \dox{y}^\ga
,\end{align}
where the first inequality is a general property of lacunary series
\cite[Theorem V.(8.20)]{Zygmund}
and the second follows by Parseval's relation and a simple calculation which
we omit.
Consequently, \eqref{WE2} yields, using \eqref{h1} in the last line,
for any $y=(y_i)_1^d\in\oi^d$, 
\begin{align}\qquad&\hskip-2em
  \int_{\oi^d}\bigabs{W(x,z)-W(y,z)}\dd z
=a\int_{\oi^d}\lrabs{\sumid \Bigpar{h(x_i-z_i)-h(y_i-z_i)}}\dd z
\notag\\&
\ge a\intoi\lrabs{\int_{\oi^{d-1}}\sumid\Bigpar{ h(x_i-z_i)-h(y_i-z_i)}\dd
  z_2\dotsm\dd z_d}\dd z_1
\notag\\&
= a\intoi\bigabs{h(x_1-z_1)-h(y_1-z_1)}\dd z_1
=a\norm{h(\cdot)-h(\cdot-x_1+y_1)}_{L^1\oi}
\notag\\&
\ge a c_2 \dox{x_1-y_1}^\ga.
\end{align}
By symmetry, we also have the lower bound $a c \dox{x_i-y_i}^\ga$ for any
$i\le d$, and thus
\begin{align}\label{h6}
    \int_{\oi^d}\bigabs{W(x,z)-W(y,z)}\dd z
\ge ac_2\frac{1}{d}\sumid\dox{x_i-y_i}^\ga\ge \frac{ac_2}d\dox{x-y}^\ga.
\end{align}
The estimate \eqref{h6} implies that
$\Psi_q(W)<\infty$ for every 
$q<d/\ga$, similarly to \eqref{cd}.
\end{proof}

\begin{proof}[Proof of claim in \refE{E2}]
As for \refE{E1}, now using \refLs{L1}, \ref{L2} and \ref{LE2}.
\end{proof}

\begin{remark}
  Although the proofs are for specific examples, the arguments suggest that
the conclusions of 
\refL{L3} and \ref{LE2}, and as a consequence the conclusions of
\refE{E1} and \refE{E2}, are typical of graphons on
$\oi^d$ with the given smoothness, rather than exceptional.
\end{remark}

\newcommand\AMS{Amer. Math. Soc.}
\newcommand\Springer{Springer-Verlag}
\newcommand\Wiley{Wiley}

\newcommand\vol{\textbf}
\newcommand\jour{\emph}
\newcommand\book{\emph}
\newcommand\inbook{\emph}
\def\no#1#2,{\unskip#2, no. #1,} 
\newcommand\toappear{\unskip, to appear}

\newcommand\arxiv[1]{\texttt{arXiv}:#1}
\newcommand\arXiv{\arxiv}

\def\nobibitem#1\par{}

\end{document}